%%%%%%%%%%%%%%%%%%%%%%%%%%%%%%%%%%%%%%%%%%%%%%%%%%%%%%%%%%%%%%%%%%%%%%%%%%%%%%%%
%2345678901234567890123456789012345678901234567890123456789012345678901234567890
%        1         2         3         4         5         6         7         8

\documentclass[letterpaper, 10 pt, conference]{ieeeconf}  % Comment this line out if you need a4paper

\IEEEoverridecommandlockouts                              % This command is only needed if 
\usepackage{bbding}
                                                          % you want to use the \thanks command

\overrideIEEEmargins                                      % Needed to meet printer requirements.

%In case you encounter the following error:
%Error 1010 The PDF file may be corrupt (unable to open PDF file) OR
%Error 1000 An error occurred while parsing a contents stream. Unable to analyze the PDF file.
%This is a known problem with pdfLaTeX conversion filter. The file cannot be opened with acrobat reader
%Please use one of the alternatives below to circumvent this error by uncommenting one or the other
%\pdfobjcompresslevel=0
%\pdfminorversion=4

% See the \addtolength command later in the file to balance the column lengths
% on the last page of the document

% The following packages can be found on http:\\www.ctan.org
%\usepackage{graphics} % for pdf, bitmapped graphics files
%\usepackage{epsfig} % for postscript graphics files
%\usepackage{mathptmx} % assumes new font selection scheme installed
%\usepackage{times} % assumes new font selection scheme installed
%\usepackage{amsmath} % assumes amsmath package installed
%\usepackage{amssymb}  % assumes amsmath package installed

\title{\LARGE \bf
Comparison theorem for infinite-dimensional linear impulsive systems*
}

\author{Vladyslav Bivziuk$^{1}$, Sergey Dashkovskiy\,\Envelope\,$^{2}$ \; and Vitalii Slynko$^{2}$% <-this % stops a space
\thanks{*This work was partially supported by the DFG grant number SL343/1-1. \Envelope\, Corresponding author. }% <-this % stops a space
\thanks{$^{1}$Vladyslav Bivziuk is with the Department of Mathemetics, University of Illinois Urbana-Champaign, Champaign, IL 61820, USA
        {\tt\small bivziuk2@illinois.edu}}%
\thanks{$^{2}$Sergey Dashkovskiy and Vitalii Slynko are with the Institute of Mathematics, University of Würzburg,
        Germany 
        {\tt\small name.surname@mathematik.uni-wuerzburg.de}}%
%\thanks{$^{2}$Vitalii Slynko is with the Institute of Mathematics, University of Würzburg,
        %Germany
        %{\tt\small vitalii.slynko@mathematik.uni-wuerzburg.de}}%
}

\usepackage[utf8]{inputenc}
\usepackage{cite}
\usepackage{amsmath, amsfonts, amssymb}
\usepackage{amsthm}
\usepackage[english]{babel}
%\numberwithin{equation}{section}

\newcommand{\Ree}{\operatorname{Re}}

\newcommand{\T}{\operatorname{T\,}}

\newcommand{\id}{\operatorname{id\,}}

\theoremstyle{plain}
\newtheorem{theorem}{Theorem}
\newtheorem{lemma}{Lemma}
\newtheorem{proposition}{Proposition}

\theoremstyle{definition}
\newtheorem{definition}{Definition}

\begin{document}

\maketitle
\thispagestyle{empty}
\pagestyle{empty}

%%%%%%%%%%%%%%%%%%%%%%%%%%%%%%%%%%%%%%%%%%%%%%%%%%%%%%%%%%%%%%%%%%%%%%%%%%%%%%%%
\begin{abstract}
% Мы рассматриваем линейную импульсную систему в бесконечно-мерном банаховом пространстве. Предполагается, что моменты импульсного воздействия удовлетворяют ADT условию, а линейный оператор в правой части дифференциального уравнения генерирует аналитическую полугруппу в пространстве состояний. Используя коммутаторные тождества мы доказываем теорему сравнения, которая сводит проблему асимптотической устойчивости исходной  системы к исследованию более простой системы с постоянными dwell-time. Приведен иллюстративный пример линейной импульсной системы параболического типа, в которой непрерывная и дискретная динамики обе неустойчивы. 
We consider a linear impulsive system in an infinite-dimensional Banach space. It is assumed that the moments of impulsive action satisfy the averaged dwell-time condition and the linear operator on the right side of the differential equation generates an analytic semigroup in the state space. Using commutator identities, we prove a comparison theorem that reduces the problem of asymptotic stability of the original system to the study of a simpler system with constant dwell-times. An illustrative example of a linear impulsive system of parabolic type in which the continuous and discrete dynamics are both unstable is given.

\end{abstract}

%%%%%%%%%%%%%%%%%%%%%%%%%%%%%%%%%%%%%%%%%%%%%%%%%%%%%%%%%%%%%%%%%%%%%%%%%%%%%%%%

\section{Introduction} 
% Исследования устойчивости гибридных систем \cite{TGS2012} имеет важную роль в теории управления. К классу гибридных систем обычно относят импульсные системы \cite{SP1995,LBS1989},  системы с переключениями \cite{L2003}. Ли-алгебраические методы для исследования конечно-мерных систем с переключениями ранее использовались в \cite{NaBa1994,MaAg2000,Gur1995,L2002,ABL2012}. Для линейных импульсных систем с ограниченными операторами в правой части методы коммутаторного исчисления применялись в \cite{SBT2019,SlB2019,SlB2020}. ISS свойство и устойчивость связанных нелинейных импульсных бесконечно-мерных систем со входом также исследовались в работах \cite{DM2013,DS2022,DS2023,MGL2018}. 
The study of the stability of hybrid systems \cite{TGS2012} plays an important role in control theory. The class of hybrid systems usually includes impulsive systems \cite{SP1995,LBS1989} and systems with switching \cite{L2003}. Lie algebraic methods for stability investigations of finite-dimensional switching systems were previously used in \cite{NaBa1994,MaAg2000,Gur1995,L2003,ABL2012}. For linear impulsive systems with bounded operators on the right side, the commutator calculus methods were used in \cite{SBT2019,SlB2019,SlB2020}. The ISS property and stability of coupled nonlinear impulsive infinite-dimensional input systems were also studied in \cite{DM2013,DS2022,DS2023,MGL2018}.

% Целью настоящей работы является расширение результатов \cite{SBT2019} для более широкого класса линейных импульсных систем, которые включает в дифференциальные уравнения с частными производными, для которых влияние импульсных возмущений является недостаточно изученным. Основной вклад настоящей статьи является обоснование принципа сравнения, который сводит задачу об устойчивости линейной импульсной системы, для которой последовательность моментов импульсного воздействия удовлетворяет ADT условию к исследованию устойчивости линейной импульсной системы с постоянными dwell-time. Эта задача существенно проще и может быть решена на основе метода функций Ляпунова из класса кусочно-дифференцируемых по времени функций.
The aim of this paper is to extend the results of \cite{SBT2019} to a wider class of linear impulsive systems which includes partial differential equations for which the influence of impulsive disturbances is not well understood. The main contribution of this article is the substantiation of the comparison principle which reduces the problem of the stability of a linear impulsive system for which the sequence of moments of impulsive action satisfies the averaged dwell-time (ADT) condition to the study of the stability of a linear impulsive system with constant dwell-time. This problem is much simpler and can be solved on the basis of the Lyapunov function method from the class of piecewise time-differentiable functions.

In order to derive the main result we apply the Hadamard's commutator formula from \cite{Mag54}, which is extended here to the case of analytic semi-groups.

% Работа состоит из 6 разделов. Во втором разделе приводится постановка задачи, в третьем мы доказываем вспомогательный результат, который состоит в расширении формулы Адамара для некоторых классов неограниченных операторов. В четвертом разделе доказан основной результат, а в пятом приводятся некоторые примеры. В шестом разделе приведены выводы. 
The work consists of six sections. In the second section, we state the problem; in the third section, we prove an auxiliary result which extends the Hadamar's formula for some classes of unbounded operators. In the fourth section, the main result is proved, and in the fifth section, some examples are given. The sixth section contains conclusions.

\section{Problem statement} 
To state the problem we use the following notation. $\mathbb Z_+$ denotes non-negative integers. Let $X$ be a Banach space, by $L(X)$ we denote the set of linear bonded operators from $X$ to $X$. By $B_r(0)$ we denote the ball of radius $r\ge0$ around the origin. The commutator of $D,C\in L(X)$ is defined by $[D,C]:=DC-CD$. For symmetric square matrices $\bold P,\bold Q$ we write $\bold P\prec \bold Q$ if and only if the matrix $\bold Q-\bold P$ is positive definite.

 We consider the linear impulsive system
\begin{equation}\label{DEq-1}
\gathered
\dot x(t)=Ax(t),\quad t\ne\tau_k,\quad x(\tau_0^+)=x_0\in\mathrm{D}(A),\\
x(t^+)=Bx(t),\quad t=\tau_k,
\endgathered
\end{equation}
where $x\in X$ is a state vector, $A$ is a closed densely defined linear operator with domain
$\mathrm{D}(A)$ that generates an analytic semigroup $(T_t)_{t\in\Bbb R_+}\subset L(X)$ in the space $X$.
Assume that $B$ is a closed linear operator from $\mathrm D(B)$ to $X$. Since we are considering classical solutions of \eqref{DEq-1}, we assume that $B\mathrm{D}(A)\subseteq \mathrm{D}(A)$. 
Here, $\{\tau_k\}_{k=0}^{\infty}$ is a sequence of moments of impulsive action which is assumed to be increasing and having a single accumulation point at infinity. For this sequence, we assume that the ADT condition is satisfied in the following form: there are constants $\theta>0$ and $\chi_{\max}\in[0,\theta)$ such that for all $k\in\Bbb Z_+$, the following inequality holds
\begin{equation}\label{ADT}
\gathered
|\tau_k-\tau_0-k\theta|\le\chi_{\max}.
\endgathered
\end{equation}

We recall the well-known fact that a closed densely defined linear operator $A$ is a generator of an analytic semigroup if and only if it is sectorial in the sense of the following definition.
Let $R_{A}(\lambda)$ be the resolvent of the operator $A$ and $\rho(A)$ be the resolvent set of the operator $A$.

\begin{definition}[\cite{Pazy}]
% {\bf Definition 1.} 
A closed linear operator $A$ with $\overline{\mathrm{D}(A)}=X$
is called sectorial if for some $a\in\Bbb R$ and $\phi\in(\frac{\pi}{2},\pi)$ it holds that
\begin{equation*}
\gathered
\Sigma_{a,\phi}:=\{\lambda\in\Bbb C\setminus\{a\}\,|\,|\arg(\lambda-a)|<\phi\}\subset\rho(A),
\endgathered
\end{equation*}
 and there is a positive constant $K$ such that for all $\lambda\in\Sigma_{a,\phi},$
the following inequality holds
\begin{equation}\label{Sector}
\gathered
\|R_{A}(\lambda)\|_{L(X)}\le\frac{K}{|\lambda-a|}.
\endgathered
\end{equation}
\end{definition}

\section{Commutators} 
It is known that linear bounded operators $D,C\in L(X)$ satisfy the identity (Hadamard's formula from \cite{Mag54})
\begin{equation}\label{HF}
\gathered
De^{tC}=e^{tC}\sum\limits_{m=0}^{\infty}\frac{t^m}{m!}\{D,C^m\}.
\endgathered
\end{equation}
Here, $\{D,C^m\}$, $m\in\Bbb Z_+$ is a sequence of nested commutators defined recurrently
\begin{equation*}
\gathered
\{D,C^0\}:=D,\quad \{D,C^{m+1}\}:=[\{D,C^{m}\},C],\quad m\in\Bbb Z_+.
\endgathered
\end{equation*}
We define an extension of the operator $\{B,A^m\}$, $m\in \Bbb Z_+$ for the case of unbounded operators $A$ and $B$ inductively. Let $\{B,A^0\}:=A$. Let a linear operator $\{B,A^m\}$ with domain $\mathrm{D}(\{B,A^m\})$ be already defined for some $m\in\Bbb Z_+$. We denote
\begin{equation*}
\gathered
\widehat{\mathrm{D}}:=\{x\in X\,\,|\, Ax\in \mathrm{D}(\{B,A^m\}),\quad \{B,A^m\}x\in\mathrm{D}(A)\,\}.
\endgathered
\end{equation*}
Let $\widehat{\mathrm{D}}\supseteq\mathrm{D}(A)$ and a linear operator with domain $\widehat{\mathrm{D}}$ acting by the rule
\begin{equation*}
\widehat{\mathrm{D}}\ni x\mapsto \{B,A^{m}\}Ax-A\{B,A^{m}\}x.
\end{equation*}
be closable and its closure is denoted by $\{B,A^{m+1}\}$.

We note that by definition $\mathrm{D}(\{B,A^{m}\})\supseteq\mathrm{D}(A)$.

\begin{lemma}
Assume that ,
the operators $\{B,A^m\}$ are defined for all $m\in\Bbb Z_+$, satify the condition $\{B,A^m\}\mathrm{D}(A)\subseteq\mathrm{D}(A)$ and
\begin{equation}\label{ComIneq}
\|\{B,A^{m}\}R_{A}(\lambda)\|_{L(X)}\le\frac{K_1\eta^m}{|\lambda-a|}
\end{equation}
for some constants $\eta>0$, $K_1>0$ and all $\lambda\in\Sigma_{a,\phi}$.

 Then for all $x\in \mathrm{D}(A)$,
the following equality holds
\begin{equation}\label{HF-2}
BT_tx=T_t\sum\limits_{m=0}^{\infty}\frac{t^m}{m!}\{B,A^m\}x.
\end{equation}
\end{lemma}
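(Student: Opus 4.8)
The plan is to establish the identity~\eqref{HF-2} by a resolvent/Laplace-transform argument, using the analyticity of the semigroup to make sense of the (a priori only formal) series in~\eqref{HF} when $A$ is unbounded. First I would verify that the series on the right-hand side of~\eqref{HF-2} converges for $x \in \mathrm{D}(A)$: writing $\{B,A^m\}x = \{B,A^m\}R_A(\lambda)(\lambda I - A)x$ for a fixed $\lambda \in \Sigma_{a,\phi}$, the bound~\eqref{ComIneq} gives $\|\{B,A^m\}x\| \le \frac{K_1\eta^m}{|\lambda-a|}\|(\lambda I - A)x\|$, so $\sum_m \frac{t^m}{m!}\{B,A^m\}x$ converges absolutely in $X$, uniformly for $t$ in compact sets, and defines a continuous $X$-valued function; call its value $S_t x$. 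Since $T_t \in L(X)$, the right-hand side $T_t S_t x$ is then well defined and continuous in $t$.

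Next I would identify both sides through their Laplace transforms (equivalently, via the resolvent). The idea is to represent $T_t$ by the Dunford–Taylor integral $T_t = \frac{1}{2\pi i}\int_{\Gamma} e^{\lambda t} R_A(\lambda)\,d\lambda$ over a suitable sectorial contour $\Gamma$ around the spectrum, which is legitimate because $A$ is sectorial. Applying $B$ and moving it inside the integral — justified because $B$ is closed and, on $\mathrm{D}(A)$, $BR_A(\lambda) = \{B,A^0\}\cdots$ hmm, more precisely because $BR_A(\lambda)$ is bounded by~\eqref{ComIneq} with $m=0$ and the integrand $e^{\lambda t}BR_A(\lambda)x$ is integrable over $\Gamma$ — I get $BT_t x = \frac{1}{2\pi i}\int_{\Gamma} e^{\lambda t} BR_A(\lambda)x\,d\lambda$. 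The key algebraic step is then the resolvent commutator expansion: from $[B, R_A(\lambda)] = R_A(\lambda)[\,\cdot\,]R_A(\lambda)$-type identities one derives, by induction, the finite expansion
\begin{equation*}
BR_A(\lambda) = \sum_{m=0}^{N-1}\frac{1}{(\lambda-a)^{m}}\,\{B,A^m\}R_A(\lambda)\cdot(\text{combinatorial factor}) + (\text{remainder}),
\end{equation*}
or, cleaner, the operator identity $R_A(\lambda)\{B,A^m\} - \{B,A^{m}\}R_A(\lambda) = R_A(\lambda)\{B,A^{m+1}\}R_A(\lambda)$ on $\mathrm{D}(A)$, which telescopes to $BR_A(\lambda)x = \sum_{m=0}^\infty \{B,A^m\}R_A(\lambda)^{m+1}x$ with the tail controlled by~\eqref{ComIneq} (the geometric series in $\eta/|\lambda-a|$ converges on the part of $\Gamma$ far from $a$, and near $a$ one shifts the contour). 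Substituting this into the Dunford integral and using $\frac{1}{2\pi i}\int_\Gamma e^{\lambda t} R_A(\lambda)^{m+1}x\,d\lambda = \frac{1}{m!}\frac{d^m}{da^m}$-type formula, i.e. $\frac{1}{2\pi i}\int_\Gamma e^{\lambda t} R_A(\lambda)^{m+1}\,d\lambda = \frac{t^m}{m!}T_t$ (the standard formula for powers of the resolvent), yields exactly $BT_t x = \sum_{m=0}^\infty \frac{t^m}{m!} T_t \{B,A^m\}x = T_t S_t x$, which is~\eqref{HF-2} after interchanging $T_t$ with the (norm-convergent) sum.

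The main obstacle I expect is making the interchange of the infinite sum with the contour integral rigorous: the bound~\eqref{ComIneq} controls $\{B,A^m\}R_A(\lambda)$ but the expansion produces $\{B,A^m\}R_A(\lambda)^{m+1} = \{B,A^m\}R_A(\lambda)\cdot R_A(\lambda)^m$, and on the sectorial contour $\|R_A(\lambda)\|$ decays only like $1/|\lambda-a|$, so the series $\sum_m \eta^m \|R_A(\lambda)\|^m$ converges only where $|\lambda - a| > \eta$. One must therefore split $\Gamma$ into a bounded arc near $a$ (deformed so that $|\lambda-a|$ stays bounded below by, say, $2\eta$, which is possible inside the sector) and two unbounded rays, on which the exponential factor $e^{\Re\lambda\,t}$ decays; a dominated-convergence / Fubini argument on each piece then permits the interchange. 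A secondary technical point is justifying that $B$, though merely closed, commutes with the contour integral — this follows from the closedness of $B$ together with the fact that the integral of $BR_A(\lambda)x$ converges and $R_A(\lambda)x \in \mathrm{D}(B) \supseteq \mathrm{D}(A)$ along the path, using the standard "closed operators commute with convergent Bochner integrals" lemma. Once these interchanges are in place, the identity follows, and the hypothesis $\{B,A^m\}\mathrm{D}(A)\subseteq\mathrm{D}(A)$ guarantees all the intermediate expressions stay in the domains where the recursive definition of $\{B,A^{m+1}\}$ applies.
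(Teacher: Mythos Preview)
Your approach is essentially the paper's: the Dunford--Taylor contour representation of $T_t$, the resolvent--commutator identity (which in fact telescopes to $BR_A(\lambda)x=\sum_{m=0}^{N}R_A(\lambda)^{m+1}\{B,A^m\}x+R_A(\lambda)^{N+1}\{B,A^{N+1}\}R_A(\lambda)x$---note the resolvent powers sit on the \emph{left}, not the right as you wrote), and the formula $\frac{1}{2\pi i}\int_\Gamma e^{\lambda t}R_A(\lambda)^{m+1}\,d\lambda=\frac{t^m}{m!}T_t$. The only difference is in execution: rather than pass to the infinite resolvent series and then justify interchanging the sum with the contour integral, the paper keeps the finite expansion above, integrates it to obtain $BT_tx=T_t\sum_{m=0}^{N}\frac{t^m}{m!}\{B,A^m\}x+\frac{1}{2\pi i}\int_\Gamma e^{\lambda t}R_A^{N+1}(\lambda)\{B,A^{N+1}\}R_A(\lambda)x\,d\lambda$, and shows this remainder integral tends to $0$ as $N\to\infty$ once the contour is chosen with $r>K\eta$ (so the threshold involves $K\eta$, not $\eta$ alone).
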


\begin{proof}
% {\it Proof.} 
Without loss of generality, we can assume that $a=0$.
First of all, using the method of mathematical induction, we show that for any $N\in\mathbb Z_+$ the identity
\begin{equation}\label{HF-3}
\begin{gathered}
    BT_tx=T_t\sum\limits_{m=0}^{N}\frac{t^m}{m!}\{B,A^m\}x\\ +\frac{1}{2\pi i}\int\limits_{\Gamma}e^{\lambda t}R_A^{N+1}(\lambda)\{B,A^{N+1}\}R_A(\lambda)x\,d\lambda.
\end{gathered}
\end{equation}
holds for all $x\in\mathrm{D}(A)$. Here and further we denote,
\begin{equation*}
\int\limits_{\Gamma}F(\lambda)x\,d\lambda:=\lim\limits_{R\to+\infty}
\int\limits_{\Gamma_R}F(\lambda)x\,d\lambda,
\end{equation*}
where $F\,:\,\rho(A)\to L(X)$, $\Gamma=\Gamma(r,\psi)=\Gamma_1\cup\Gamma_2\cup\Gamma_3$,
$\Gamma_R=\Gamma\cap\overline{B_R(0)}$, $\psi\in (\frac{\pi}{2},\phi)$, $r>0$, $R>0$,
\begin{equation*}
\gathered
\Gamma_1=\Gamma_1(r,\psi)=\{\lambda=-se^{i\psi}\,|\,s\in(-\infty,-r]\},\\
\Gamma_2=\Gamma_2(r,\psi)=\{\lambda=re^{i\alpha}\,|\,\alpha\in[-\psi,\psi]\},\\
\Gamma_3=\Gamma_3(r,\psi)=\{\lambda=se^{i\psi}\,|\,s\in(r,\infty)\},\\
\endgathered
\end{equation*}
 Indeed, using the Dunford--Taylor formula \cite{Pazy}, we obtain
\begin{equation}\label{DSh}
T_tx=\frac{1}{2\pi i}\int\limits_{\Gamma}e^{\lambda t}R_A(\lambda)x\,d\lambda.
\end{equation}

Since $T_tx\in\mathrm{D}(A)\subset\mathrm{D}(B)$ and the operator $B$ is closed, we have
\begin{equation*}
BT_tx=\frac{1}{2\pi i}\int\limits_{\Gamma}e^{\lambda t}BR_A(\lambda)x\,d\lambda.
\end{equation*}
Taking into account the assumption $B\mathrm{D}(A)\subseteq\mathrm{D}(A)$, we get for all $x\in\mathrm D(A)$ that
\begin{equation*}
\gathered
BR_A(\lambda)x-R_A(\lambda)Bx=R_A(\lambda)(\lambda\id-A)BR_A(\lambda)x \\
-R_A(\lambda)B(\lambda\id-A)R_A(\lambda)x=R_A(\lambda)((\lambda\id-A)B\\ 
-B(\lambda\id-A))R_A(\lambda)x
=R_A(\lambda)[B,A]R_A(\lambda)x.
\endgathered
\end{equation*} 

Substituting the expression for $BR_A(\lambda)x$ into \eqref{DSh}, we get
\begin{equation*}
\gathered
BT_tx=\frac{1}{2\pi i}\int\limits_{\Gamma}e^{\lambda t}R_A(\lambda)Bx\,d\lambda\\
+\frac{1}{2\pi i}\int\limits_{\Gamma}e^{\lambda t}R_A(\lambda)[B,A]R_A(\lambda)x\,d\lambda\\=
\frac{1}{2\pi i}\int\limits_{\Gamma}e^{\lambda t}R_A(\lambda)\,d\lambda Bx\\
+\frac{1}{2\pi i}\int\limits_{\Gamma}e^{\lambda t}R_A(\lambda)[B,A]R_A(\lambda)x\,d\lambda\\=
T_tBx+\frac{1}{2\pi i}\int\limits_{\Gamma}e^{\lambda t}R_A(\lambda)[B,A]R_A(\lambda)x\,d\lambda.
\endgathered
\end{equation*}
Therefore, the formula \eqref{HF-3} is proven for $N=0$.

Assume that the formula \eqref{HF-3} is valid for $N=p$.
Then, given that $\mathrm{D}(\{B,A^{p+1}\})\supseteq\mathrm{D}(A)$ and $\{B,A^{p+1}\}\mathrm{D}(A)\subseteq \mathrm{D}(A)$ for $x\in\mathrm{D}(A)$, we get that
\begin{equation*}
\gathered
\{B,A^{p+1}\}R_A(\lambda)x-R_A(\lambda)\{B,A^{p+1}\}x\\=
R_A(\lambda)(\lambda\id-A)\{B,A^{p+1}\}R_A(\lambda)x\\ 
-R_A(\lambda)\{B,A^{p+1}\}(\lambda\id-A)R_A(\lambda)x\\
=R_A(\lambda)\{B,A^{p+2}\}R_A(\lambda)x.
\endgathered
\end{equation*}
Expressing $\{B,A^{p+1}\}R_A(\lambda)x$ from here and substituting into the formula \eqref{HF-3} for $N=p$, we get
\begin{equation}\label{HF-4}
\gathered
BT_tx=T_t\sum\limits_{m=0}^{p}\frac{t^m}{m!}\{B,A^m\}x\\ 
+\frac{1}{2\pi i}\int\limits_{\Gamma}e^{\lambda t}R_A^{p+1}(\lambda)\{B,A^{p+1}\}R_A(\lambda)x\,d\lambda\\
=T_t\sum\limits_{m=0}^{p}\frac{t^m}{m!}\{B,A^m\}x+\frac{1}{2\pi i}\int\limits_{\Gamma}e^{\lambda t}R_A^{p+2}(\lambda)\{B,A^{p+1}\}x\,d\lambda\\
+\frac{1}{2\pi i}\int\limits_{\Gamma}e^{\lambda t}R_A^{p+2}(\lambda)\{B,A^{p+2}\}R_A(\lambda)x\,d\lambda\\
=T_t\sum\limits_{m=0}^{p}\frac{t^m}{m!}\{B,A^m\}x+\frac{1}{2\pi i}\int\limits_{\Gamma}e^{\lambda t}R_A^{p+2}(\lambda)\,d\lambda\{B,A^{p+1}\}x\\
+\frac{1}{2\pi i}\int\limits_{\Gamma}e^{\lambda t}R_A^{p+2}(\lambda)\{B,A^{p+2}\}R_A(\lambda)x\,d\lambda.
\endgathered
\end{equation}
Using the identity (5.22) from \cite{Pazy}:
\begin{equation*}
\gathered
R_{A}^{p+2}(\lambda)=\frac{(-1)^{p+1}}{(p+1)!}\frac{d^{p+1}}{d\lambda^{p+1}}R_A(\lambda)
\endgathered
\end{equation*}
and applying integration by parts $p+1$ times (taking into account that $|e^{\lambda t}|\to 0$ as $\Ree\lambda\to-\infty$ and \eqref{Sector}), we obtain
\begin{equation*}
\gathered
\frac{1}{2\pi i}\int\limits_{\Gamma}e^{\lambda t}R_A^{p+2}(\lambda)\,d\lambda\\ =\frac{1}{2\pi i}\frac{(-1)^{p+1}}{(p+1)!}\int\limits_{\Gamma}
e^{\lambda t}\frac{d^{p+1}}{d\lambda^{p+1}}R_A(\lambda)\,d\lambda\\
=\frac{1}{2\pi i}\frac{(-1)^{p+1}}{(p+1)!}(-1)^{p+1}\int\limits_{\Gamma}\frac{d^{p+1}}{d\lambda^{p+1}}(e^{\lambda t})R_A(\lambda)\,d\lambda\\=
\frac{1}{2\pi i}\frac{t^{p+1}}{(p+1)!}\int\limits_{\Gamma}e^{\lambda t}R_A(\lambda)\,d\lambda=\frac{t^{p+1}}{(p+1)!}T_t.
\endgathered
\end{equation*}
which completes the proof of  \eqref{HF-3}.

We now show that \eqref{HF-3} implies \eqref{HF-2}.
To do this, we estimate the integral in \eqref{HF-3} taking \eqref{Sector} and \eqref{ComIneq} into account:
\begin{equation*}
\gathered
\Big\|\int\limits_{\Gamma_R}e^{\lambda t}R_A^{N+1}(\lambda)\{B,A^{N+1}\}R_A(\lambda)\,d\lambda\Big\|\\
\le \int\limits_{r}^R\frac{K_1K^{N+1}\eta^{N+1}\exp(ts\Ree e^{-i\psi})}{|se^{-i\psi}|^{N+2}}|d(se^{-i\psi})|\\+
\int\limits_{-\psi}^{\psi}\frac{K_1K^{N+1}\eta^{N+1}\exp(tr\Ree e^{i\alpha})}{|re^{i\alpha}|^{N+2}}|d(re^{i\alpha})|\\
+\int\limits_{r}^R\frac{K_1K^{N+1}\eta^{N+1}\exp(ts\Ree e^{i\psi})}{|se^{i\psi}|^{N+2}}|d(se^{i\psi})|\\
\le 2\int\limits_{r}^{\infty}\frac{K_1K^{N+1}\eta^{N+1}\exp(ts\cos(\psi))}{s^{N+1}}\,ds\\+
\int\limits_{-\psi}^{\psi}\frac{K_1K^{N+1}\eta^{N+1}\exp(tr\cos\alpha)}{r^{N+1}}d\alpha\\=
K_1K^{N+1}\eta^{N+1}\Big(2\int\limits_{r}^{\infty}\frac{\exp(ts\cos(\psi))}{s^{N+1}}\,ds\\
+\int\limits_{-\psi}^{\psi}\frac{\exp(tr\cos\alpha)}{r^{N+1}}d\alpha\Big)
\endgathered
\end{equation*}
Applying the change of variables $y=-st\cos(\psi)$, we find
\begin{equation*}
\gathered
\int\limits_{r}^{\infty}\frac{\exp(ts\cos(\psi))}{s^{N+1}}\,ds\\
=(t|\cos(\psi)|)^N\int\limits_{rt|\cos(\psi)|}^{\infty}\frac{\exp(-y)}{y^{N+1}}\,dy\\
\le(t|\cos(\psi)|)^N\frac{e^{-rt|\cos(\psi)|}}{N}(rt|\cos(\psi)|)^{-N}\\ 
=\frac{r^{-N}e^{-rt|\cos(\psi)|}}{N}.
\endgathered
\end{equation*}
Using also the estimate 
\begin{equation*}
\gathered
\int\limits_{-\psi}^{\psi}\frac{\exp(tr\cos\alpha)}{r^{N+1}}d\alpha\le
\frac{2\psi e^{rt}}{r^{N+1}},
\endgathered
\end{equation*}
we get
\begin{equation*}
\gathered
\Big\|\int\limits_{\Gamma_R}e^{\lambda t}R_A^{N+1}(\lambda)\{B,A^{N+1}\}R_A(\lambda)\,d\lambda\Big\|\\
\le
K_1K\eta (K\eta r^{-1})^N\Big(2\frac{e^{-rt|\cos(\psi)|}}{N}+
\frac{2\psi e^{rt}}{r}\Big)
\endgathered
\end{equation*}
Let $r>K\eta$. Then,

\begin{equation*}
\gathered
\Big\|\int\limits_{\Gamma_R}e^{\lambda t}R_A^{N+1}(\lambda)\{B,A^{N+1}\}R_A(\lambda)\,d\lambda\Big\|\to 0
\endgathered
\end{equation*}
as $N\to \infty$ uniformly in $R\ge R_0$, where $R_0$ is a sufficiently large positive number. Therefore, in the formula \eqref{HF-3}, one can pass to the limit $N\to\infty$  and obtain the formula \eqref{HF-2}. The lemma is proven.
\end{proof}

\section{Main result} 
Along with the original impulsive system \eqref{DEq-1}, consider the following impulsive system with constant dwell-time (comparison system)
\begin{equation}\label{CS}
\gathered
\dot z(t)=Az(t),\quad t\ne k\theta,\quad z(0^+)=z_0\in\mathrm{D}(A),\\
z(t^+)=Bz(t)+\sum\limits_{m=1}^{\infty}\frac{(\chi_{\max}+\chi_{k+1})^m}{m!}\{B,A^m\}z(t),\, t=k\theta,
\endgathered
\end{equation}
where $z\in D(A)$ and $\chi_k:=\tau_k-\tau_0-k\theta\le\chi_{\max}$, see \eqref{ADT}.

The main theorem reduces the problem of stability of the initial impulsive system \eqref{DEq-1} to the study of the comparison system \eqref{CS}.

\begin{theorem}
Let the linear operator $A$ be sectorial, the linear operator $B$ be closed, and for all $m\in \Bbb Z_+$ the linear operators $\{B,A^m\}$ be defined as above and such that $\{B,A^m\}\mathrm{D}(A)\subseteq\mathrm{D}(A)$. Assume that inequality \eqref{ComIneq} holds and $\{B,A^m\}T_{\theta-\chi_{\max}}\in L(X)$ for all $m\in\Bbb Z_+$ as well as
\begin{equation}\label{convergence}
\gathered
2e\cdot\chi_{\max} \lim\sup\limits_{m\to\infty}\frac{\|\{B,A^m\}T_{\theta-\chi_{\max}}\|^{1/m}}{m}<1.
\endgathered
\end{equation}

Then, the asymptotic stability of the linear impulsive system \eqref{CS} implies the asymptotic stability of the linear impulsive system \eqref{DEq-1}.
\end{theorem}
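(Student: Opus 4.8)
\medskip
\noindent\textbf{Proof strategy.}
The plan is to construct an explicit, uniformly bounded correspondence between the post-impulse values of a solution of \eqref{DEq-1} and the values of a carefully chosen solution of the comparison system \eqref{CS}, and then to let the (assumed) decay of the latter force the decay of the former. First I would reduce everything to the discrete dynamics: on $(\tau_k,\tau_{k+1}]$ a classical solution of \eqref{DEq-1} equals $x(t)=T_{t-\tau_k}x(\tau_k^+)$, and since \eqref{ADT} gives $0<\tau_{k+1}-\tau_k\le\theta+2\chi_{\max}$, the constant $M_0:=\sup_{0\le s\le\theta+2\chi_{\max}}\|T_s\|_{L(X)}$ is finite, so $\sup_{t\in(\tau_k,\tau_{k+1}]}\|x(t)\|\le M_0\|x(\tau_k^+)\|$ (and the first interval $[\tau_0,\tau_1]$ is controlled by $\|x_0\|$). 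Hence it suffices to study the sequence $x(\tau_k^+)$, which obeys $x(\tau_{k+1}^+)=BT_{\tau_{k+1}-\tau_k}x(\tau_k^+)$ with $x(\tau_0^+)=x_0\in\mathrm{D}(A)$, all iterates remaining in $\mathrm{D}(A)$ thanks to $B\mathrm{D}(A)\subseteq\mathrm{D}(A)$.

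The core of the proof would be the identity
\begin{equation}\label{plan-key}
x(\tau_{n+1}^+)=T_{\chi_{\max}+\chi_{n+1}}\,z(n\theta^+),\qquad n\in\mathbb{Z}_+,
\end{equation}
where $z(\cdot)$ solves \eqref{CS} with the specific initial value $z_0:=\sum_{m=0}^{\infty}\frac{(\chi_{\max}+\chi_1)^m}{m!}\{B,A^m\}T_{\theta-\chi_{\max}}x_0$. I would prove \eqref{plan-key} by induction on $n$. For $n=0$, writing $\tau_1-\tau_0=\theta+\chi_1=(\theta-\chi_{\max})+(\chi_{\max}+\chi_1)$ with \emph{both} summands nonnegative ($\chi_{\max}<\theta$ and $\chi_1\ge-\chi_{\max}$) gives $x(\tau_1^+)=BT_{\chi_{\max}+\chi_1}\bigl(T_{\theta-\chi_{\max}}x_0\bigr)$, which Lemma~1 (applied with the argument $T_{\theta-\chi_{\max}}x_0\in\mathrm{D}(A)$) rewrites as $T_{\chi_{\max}+\chi_1}z_0=T_{\chi_{\max}+\chi_1}z(0^+)$. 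For the step, from $x(\tau_{n+2}^+)=BT_{\tau_{n+2}-\tau_{n+1}}x(\tau_{n+1}^+)$, the induction hypothesis, and $(\tau_{n+2}-\tau_{n+1})+(\chi_{\max}+\chi_{n+1})=\theta+\chi_{\max}+\chi_{n+2}$, I obtain $x(\tau_{n+2}^+)=BT_{\chi_{\max}+\chi_{n+2}}\bigl(T_\theta z(n\theta^+)\bigr)$; since $T_\theta z(n\theta^+)=z((n+1)\theta)\in\mathrm{D}(A)$, Lemma~1 turns this into $T_{\chi_{\max}+\chi_{n+2}}\sum_{m=0}^{\infty}\frac{(\chi_{\max}+\chi_{n+2})^m}{m!}\{B,A^m\}z((n+1)\theta)$, which is precisely $T_{\chi_{\max}+\chi_{n+2}}z((n+1)\theta^+)$ by the jump rule of \eqref{CS} at the node $(n+1)\theta$ (this is where the index $\chi_{n+2}=\chi_{(n+1)+1}$ of \eqref{CS} gets used). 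Crucially, every semigroup time occurring in this computation --- namely $\theta-\chi_{\max}$, the numbers $\chi_{\max}+\chi_j\ge0$, and the dwell-times $\tau_{j+1}-\tau_j>0$ --- is nonnegative, so only Lemma~1, the semigroup law, and elementary arithmetic are invoked, and no ``negative-time'' semigroup is ever needed.

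It then remains to read off the conclusion. Since $\chi_{\max}+\chi_{n+1}\in[0,2\chi_{\max}]$, the constant $M_1:=\sup_{0\le s\le2\chi_{\max}}\|T_s\|_{L(X)}$ is finite, so \eqref{plan-key} gives $\|x(\tau_{n+1}^+)\|\le M_1\|z(n\theta^+)\|$. Attractivity of \eqref{CS} then forces $x(\tau_{n+1}^+)\to0$, hence $x(t)\to0$ by the reduction step, and Lyapunov stability transfers in exactly the same way using $\|z_0\|\le C\|x_0\|$ with $C:=\sum_{m=0}^{\infty}\frac{(2\chi_{\max})^m}{m!}\|\{B,A^m\}T_{\theta-\chi_{\max}}\|_{L(X)}<\infty$.

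I expect the main obstacle to be twofold. Conceptually, one has to discover the correct $z_0$ and the correct shift $\chi_{\max}+\chi_{n+1}$ in \eqref{plan-key} so that the single operator $B$ of \eqref{DEq-1}, acting over the variable dwell-time $\tau_{n+1}-\tau_n$, is matched through Lemma~1 to the \emph{modified} jump of \eqref{CS}, acting over the constant dwell-time $\theta$; the factor $T_{\theta-\chi_{\max}}$ inside $z_0$ and the appearance of $\chi_{k+1}$ (not $\chi_k$) at the node $k\theta$ of \eqref{CS} are exactly what compensate this asymmetry. Technically, one must justify convergence in $\mathrm{D}(A)$ of all the series involved --- the definition of $z_0$, the jumps of \eqref{CS}, and their images under $A$ (the latter via $A\{B,A^m\}x=\{B,A^m\}Ax-\{B,A^{m+1}\}x$, read off from the recursive definition): because $\chi_{\max}+\chi_j\le2\chi_{\max}$ and $m!>(m/e)^m$, one has $\frac{(2\chi_{\max})^m}{m!}\|\{B,A^m\}T_{\theta-\chi_{\max}}\|\le\bigl(2e\chi_{\max}\,\|\{B,A^m\}T_{\theta-\chi_{\max}}\|^{1/m}/m\bigr)^m$, so assumption \eqref{convergence} (with precisely its factor $2e$) together with $\{B,A^m\}T_{\theta-\chi_{\max}}\in L(X)$ yields geometric convergence of every such series.
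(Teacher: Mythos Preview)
Your proposal is correct and follows essentially the same route as the paper: the paper proves the identical bridge identity $x(\tau_k^+)=T_{\chi_k+\chi_{\max}}\widehat z((k-1)\theta^+)$ (your \eqref{plan-key} with $k=n+1$) by the same induction, defines the same $z_0$, uses the same factorisation $\tau_1-\tau_0=(\theta-\chi_{\max})+(\chi_{\max}+\chi_1)$ and the same semigroup bound $M=\sup_{t\in[0,\theta+2\chi_{\max}]}\|T_t\|$, and invokes Stirling together with \eqref{convergence} for the convergence of $\sum_m\frac{(2\chi_{\max})^m}{m!}\|\{B,A^m\}T_{\theta-\chi_{\max}}\|$. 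Your write-up is in fact slightly more careful than the paper's about why every semigroup time is nonnegative and about convergence of the jump series in $\mathrm D(A)$.
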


\begin{proof}
% {\it Proof.}
Let $x_0\in\mathrm{D}(A)$. Then, $x(\tau_1)=T_{\tau_1-\tau_0}x_0$. Therefore, applying the Lemma 1 and the semigroup property, we obtain
\begin{equation*}
\gathered
x(\tau_1^+)=BT_{\tau_1-\tau_0}x_0=BT_{\tau_1-\tau_0-\theta+\chi_{\max}}T_{\theta-\chi_{\max}}x_0\\
=T_{\tau_1-\tau_0-\theta+\chi_{\max}}\sum\limits_{m=0}^{\infty}\frac{(\chi_1+\chi_{\max})^m}{m!}\{B,A^m\}T_{\theta-\chi_{\max}}x_0.
\endgathered
\end{equation*}
Let $$z_0:=\sum\limits_{m=0}^{\infty}\frac{(\chi_1+\chi_{\max})^m}{m!}\{B,A^m\}T_{\theta-\chi_{\max}}x_0.$$ 
Then, using the semigroup property, we have
\begin{equation*}
\gathered
x(\tau_2)=T_{\tau_2-\tau_1}x(\tau_1^+)=T_{\tau_2-\tau_1}T_{\tau_1-\tau_0-\theta+\chi_{\max}}z_0\\=
T_{\chi_2+\chi_{\max}}T_{\theta}z_0.
\endgathered
\end{equation*}
Applying the Lemma 1 again, we get
\begin{equation*}
\gathered
x(\tau_2^+)=BT_{\chi_2+\chi_{\max}}T_{\theta}z_0\\=
T_{\chi_2+\chi_{\max}}\sum\limits_{m=0}^{\infty}\frac{(\chi_2+\chi_{\max})^m}{m!}\{B,A^m\}T_{\theta}z_0.
\endgathered
\end{equation*}
We denote by $\widehat{z}(t)$ the solution to the Cauchy problem for the linear impulsive system \eqref{CS} with the initial condition $\widehat z(0)=z_0$. Then, $x(\tau_2^+)=T_{\chi_2+\chi_{\max}}\widehat{z}(\theta^+)$.
Using the method of mathematical induction, we prove that
\begin{equation}\label{MR}
\gathered
x(\tau_k^+)=T_{\chi_k+\chi_{\max}}\widehat{z}((k-1)\theta^+),\quad k\ge 2.
\endgathered
\end{equation}
For $k=2$, this has already been proven.  Let \eqref{MR} be valid for $k=p$. Then,
\begin{equation*}
\gathered
x(\tau_{p+1}^+)=Bx(\tau_{p+1})=BT_{\tau_{p+1}-\tau_p}x(\tau_p^+)\\
=BT_{\tau_{p+1}-\tau_p}T_{\chi_p+\chi_{\max}}\widehat{z}((p-1)\theta^+)\\=
BT_{\chi_{p+1}+\chi_{\max}}T_{\theta}\widehat{z}((p-1)\theta^+)\\=
T_{\chi_{p+1}{+}\chi_{\max}}\sum\limits_{m=0}^{\infty}\frac{(\chi_{p+1}{+}\chi_{\max})^m}{m!}\{B,A^{m}\}T_{\theta}\widehat{z}((p-1)\theta^+)\\=
T_{\chi_{p+1}+\chi_{\max}}\widehat{z}(p\theta^+).
\endgathered
\end{equation*}
Therefore, \eqref{MR} is valid for $k=p+1$.

From the definition of  $z_0$ by the triangle inequality for norms we estimate
\begin{equation*}
\gathered
\|z_0\|\le \sum\limits_{m=0}^{\infty}\frac{(2\chi_{\max})^m}{m!}\|\{B,A^m\}T_{\theta-\chi_{\max}}\|\|x_0\|=:\mu\|x_0\|.
\endgathered
\end{equation*}
The convergence of the series on the right-hand side follows from the Cauchy criterion, the Stirling formula, and the condition \eqref{convergence}.
For the dwell-time, the estimate $\tau_{k+1}-\tau_k\le\theta+2\chi_{\max}$, $k\in\Bbb Z_+$ follows from \eqref{ADT}.
We denote $$M:=\sup\{\|T_t\|\,\,|\,\,t\in[0,\theta+2\chi_{\max}]\}.$$

Then, \eqref{MR}  implies the estimate
\begin{equation*}
\gathered
\|x(t)\|=\|T_{t-\tau_k}x(\tau_k^+)\|\le\|T_{t-\tau_k}\|\|x(\tau_k^+)\|\\ \le
M\|T_{\chi_k+\chi_{\max}}\widehat{z}((k-1)\theta^+)\|\\\le
M\|T_{\chi_k+\chi_{\max}}\|\cdot\|\widehat{z}((k-1)\theta^+)\|
\endgathered
\end{equation*}
for $t\in(\tau_{k},\tau_{k+1}]$. Since $0\le\chi_k+\chi_{\max}\le 2\chi_{\max}$, we have
$\|T_{\chi_k+\chi_{\max}}\|\le M$, and therefore,
$$\|x(t)\|\le M^2\|\widehat{z}((k-1)\theta^+)\|.$$
Let $\varepsilon>0$. Then, it follows from the stability of the linear impulse system \eqref{CS} that for some $\delta=\delta(\varepsilon)$ for all $k\in\Bbb Z_+$ the inequality $\|z(t)\|<\varepsilon(\mu M^2)^{-1}$ holds. 
Then, $\|x(t)\|<\varepsilon$ is satisfied for all $t\ge 0$ which proves the stability of the original linear impulsive system \eqref{DEq-1}.

The asymptotic stability of the linear impulsive system \eqref{CS} implies that for any $\varepsilon>0$ there exists $k_0(\varepsilon)\in\Bbb Z_+$ such that $$\|\widehat{z}(k\theta^+)\|<\varepsilon  M^{-2},\quad k\ge k_0(\varepsilon).$$ Then, for $t>\tau_{k+1}$, the inequality $\|x(t)\|<\varepsilon$ is satisfied. The theorem is proven.
\end{proof}

\theoremstyle{remark}
\newtheorem*{remark}{Remark}
\begin{remark}
% {\it Remark.} 
If $T_t$, $t\in\mathbb R$ is a linear group in $L(X)$ or if the sequence of moments of impulsive action $\{\tau_k\}_{k=0}^{\infty}$ which satisfies the following ADT$^+$ condition: 
\begin{equation*}
\gathered
\exists\theta>0,\;\exists\chi_{\max}\ge 0\;\forall k\in\Bbb Z_+
\quad k\theta\le \tau_k-\tau_0\le k\theta+\chi_{\max},
\endgathered
\end{equation*}
the asymptotic stability of the original linear impulsive system \eqref{DEq-1} follows from the stability of the comparison system of the form
\begin{equation}\label{CSbis}
\gathered
\dot z(t)=Az(t),\quad t\ne k\theta,\quad z(0^+)=z_0\in\mathrm{D}(A),\\
z(t^+)=Bz(t)+\sum\limits_{m=1}^{\infty}\frac{\chi_{k+1}^m}{m!}\{B,A^m\}z(t),\quad t=k\theta.
\endgathered
\end{equation}
\end{remark}

\section{Example} 
For $\ell>0$ and $\mu>0$ we consider the linear impulsive system of parabolic equations
\begin{equation}\label{PEq-1}
\gathered
\partial_t x(y,t)=\mu^2\partial^2_{yy}x(y,t)+\bold Ax(y,t),\quad t\ne\tau_k,\\
x(y,t^+)=\bold Bx(y,t),\quad t=\tau_k
\endgathered
\end{equation}
in the state space $X=L^2((0,\ell);\Bbb R^n)$, where 
$x\in C([0,\infty),L^2((0,\ell);\Bbb R^n))\cap
C^1((0,\infty),L^2((0,\ell);\Bbb R^n))$, 
$y\in[0,\ell]$, $t\in\Bbb R_+$,
$\bold A\in\Bbb R^{n\times n}$ and $\bold B\in\Bbb R^{n\times n}$. The initial and boundary conditions are given by
\begin{equation}\label{PEq-2}
\gathered
x(y,0)=x_0(y),\quad x(0,t)=x(\ell,t)=0,
\endgathered
\end{equation}
where $x_0\in H^2(0,\ell)\cap H_0^1(0,\ell)$. Let $\mathrm{D}(A)=H^2(0,\ell)\cap H_0^1(0,\ell)$, $\mathrm{D}(B)=X$
and
\begin{equation*}
\gathered
(Ax)(y):=\mu^2\partial^2_{yy}x(y)+\bold Ax(y),\quad (Bx)(y)=\bold Bx(y).
\endgathered
\end{equation*}
It follows from Theorem 1 that for the asymptotic stability of the linear impulsive system \eqref{PEq-1}-\eqref{PEq-2}, it suffices to check the asymptotic stability of the comparison system
\begin{equation}\label{CS-1}
\gathered
\partial_t z(y,t)=\mu^2\partial^2_{yy}z(y,t)+\bold Az(y,t),\quad t\ne k\theta,\\
z(y,t^+)=\bold Bz(y,t)+\bold G_kz(y,t),\quad t=k\theta,
\endgathered
\end{equation}
where $z\in C^1(\Bbb R_+,X)$, $y\in[0,\ell]$, $t\in\Bbb R_+$,
$\bold G_k\in\Bbb R^{n\times n}$, $\|\bold G_k\|\le\sum_{m=1}^{\infty}\frac{(2\chi_{\max})^m}{m!}\|\{\bold B,\bold A^m\}\|:=\omega$. The initial and boundary conditions are given by
\begin{equation}\label{CS-2}
\gathered
z(y,0)=z_0(y),\quad z(0,t)=z(\ell,t)=0,
\endgathered
\end{equation}
where $z_0\in H^2(0,\ell)\cap H_0^1(0,\ell)$. 
To study the stability of the comparison system \eqref{CS-1}---\eqref{CS-2}, we define a candidate Lyapunov function by
\begin{equation}\label{LF}
\gathered
V(t,z):=\int\limits_0^{\ell}z^{\T}(y)\bold P(t)z(y)\,dy.
\endgathered
\end{equation}
Here, $\bold P\,:\,\,\Bbb R_+\to\Bbb R^{n\times n}$ is a piece-wise continuous and differentiable on the set $\Bbb R_+\setminus\theta\Bbb Z_+$ and $\theta$-periodic map
with values
$P(t)$ in the set of symmetric positive-definite matrices.

The total derivative of this function along the semiflow generated by \eqref{CS-1}---\eqref{CS-2} is
\begin{equation}\label{DLF}
\gathered
\dot V(t,z):=2\mu^2\int\limits_0^{\ell}z^{\T}(y)\bold P(t)\partial_{yy}^2z(y)\,dy\\
+\int\limits_0^{\ell}z^{\T}(y)(\dot{\bold P}(t)+\bold A^{\T}\bold P(t)+\bold P(t)\bold A)z(y)\,dy.
\endgathered
\end{equation}
Applying integration by parts and the Friedrich's inequality, we obtain
\begin{equation*}
\gathered
\int\limits_0^{\ell}z^{\T}(y)\bold P(t)\partial_{yy}^2z(y)\,dy=
-\int\limits_0^{\ell}(\partial_{y}z(y))^{\T}\bold P(t)\partial_{y}z(y)\,dy\\=
-\int\limits_0^{\ell}\|\bold P^{1/2}(t)\partial_{y}z(y)\|^2\,dy
\le-\frac{\pi^2}{\ell^2}\int\limits_0^{\ell}\|\bold P^{1/2}(t)z(y)\|^2\,dy\\=
-\frac{\pi^2}{\ell^2}\int\limits_0^{\ell}z^{\T}(y)\bold P(t)z(y)\,dy
\endgathered
\end{equation*}
Therefore,
\begin{equation*}
\gathered
\dot V(t,z)\le 
\int\limits_0^{\ell}z^{\T}(y)(\dot{\bold P}(t)+(\bold A-\frac{\pi^2\mu^2}{\ell^2}\id)^{\T}\bold P(t)\\ 
+\bold P(t)(\bold A-\frac{\pi^2\mu^2}{\ell^2}\id))z(y)\,dy.
\endgathered
\end{equation*}
We choose $\bold P(t)$ so that for $t\in\Bbb R_+\setminus\theta\Bbb Z_+$, the equality
\begin{equation*}
\gathered
\dot{\bold P}(t)+(\bold A-\frac{\pi^2\mu^2}{\ell^2}\id)^{\T}\bold P(t)+\bold P(t)(\bold A-\frac{\pi^2\mu^2}{\ell^2}\id)=0
\endgathered
\end{equation*}
is satisfied. Then, $$\bold P(t)=e^{\frac{2\pi^2\mu^2(t-k\theta)}{\ell^2}}e^{-\bold A^{\T}(t-k\theta)}\bold P_0e^{-\bold A(t-k\theta)}$$
for $t\in(k\theta,(k+1)\theta]$. 
Assume that there exists a positive-definite matrix $\bold P_0$ that satisfies the matrix inequality
\begin{equation}\label{MIneq}
\gathered
e^{\frac{-2\pi^2\mu^2\theta}{\ell^2}}\Phi^{\T}\bold P_0\Phi+
e^{\frac{-2\pi^2\mu^2\theta}{\ell^2}}(2\omega\|\bold B\bold P_0\|\\
+\omega^2\|\bold P_0\|)e^{\bold A^{\T}\theta}e^{\bold A\theta}\prec\bold P_0,
\endgathered
\end{equation}
where $\Phi=\bold Be^{\bold A\theta}$.
 
At the moments of jumps $t=(k+1)\theta$, $k\in\Bbb Z_+$,
\begin{equation*}
\gathered
V((k+1)\theta^+,z^+)=\int\limits_0^{\ell}(\bold Bz(y))^{\T}\bold P_0(\bold Bz)(y)\,dy\\
+2\int\limits_0^{\ell}(\bold Bz(y))^{\T}\bold P_0(\bold G_kz)(y)\,dy\\
+\int\limits_0^{\ell}(\bold G_kz(y))^{\T}\bold P_0(\bold G_kz)(y)\,dy\\
\le\int\limits_0^{\ell}(z(y))^{\T}\bold B^{\T}\bold P_0\bold Bz(y)\,dy\\
+2\int\limits_0^{\ell}\|\bold B^{\T}\bold P_0\|\omega(z(y))^{\T}z(y)\,dy\\+
\int\limits_0^{\ell}\omega^2\|\bold P_0\|(z(y))^{\T}z(y)\,dy\\=
\int\limits_0^{\ell}(z(y))^{\T}(\bold B^{\T}\bold P_0\bold B
+(2\omega\|\bold B^{\T}\bold P_0\|+\omega^2\|\bold P_0\|)\id)z(y)\,dy\\
<\int\limits_0^{\ell}(z(y))^{\T}\bold P(\theta)z(y)\,dy=V((k+1)\theta,z).
\endgathered
\end{equation*}
Therefore, $V(t,z)$ is a Lyapunov function for the linear impulsive system \eqref{CS-1}-\eqref{CS-2}, hence this system is asymptotically stable. From Theorem 1 we obtain sufficient conditions for the asymptotic stability of the original system \eqref{PEq-1}-\eqref{PEq-2}:
\begin{proposition}
Let the sequence of moments of impulsive action $\{\tau_k\}_{k=0}^{\infty}$ satisfy the ADT condition \eqref{ADT}, $\Phi=\bold Be^{\bold A\theta}$, $\omega=\sum_{m=1}^{\infty}\frac{(2\chi_{\max})^m}{m!}\|\{\bold B,\bold A^m\}\|$, $r_{\sigma}(\Phi)<e^{\pi^2\mu^2\theta/\ell^2}$ and for some positive-definite matrix $\bold P_0$ the inequality \eqref{MIneq} holds. 
Then system \eqref{PEq-1}--\eqref{PEq-2} is asymptotically stable.
\end{proposition}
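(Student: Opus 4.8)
The plan is to verify that the operators $A$ and $B$ associated with \eqref{PEq-1}--\eqref{PEq-2} satisfy all the hypotheses of Theorem~1, so that the asymptotic stability of the impulsive PDE is reduced to that of its comparison system \eqref{CS}; to observe that in the present setting \eqref{CS} is precisely \eqref{CS-1}--\eqref{CS-2}; and then to establish the asymptotic stability of \eqref{CS-1}--\eqref{CS-2} by means of the explicit Lyapunov function \eqref{LF} constructed above.

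First I would check the hypotheses of Theorem~1. The operator $\mu^2\partial^2_{yy}$ on $X=L^2((0,\ell);\mathbb R^n)$ with domain $\mathrm D(A)=H^2(0,\ell)\cap H_0^1(0,\ell)$ is self-adjoint and negative definite, hence sectorial, and adding the bounded multiplication operator $\mathbf A$ is a bounded perturbation which preserves sectoriality; so $A$ is sectorial. The operator $B$ is multiplication by the constant matrix $\mathbf B$, hence bounded, in particular closed, and $B\mathrm D(A)\subseteq\mathrm D(A)$. Since $\mathbf B$ is constant it commutes with $\partial^2_{yy}$, so by an easy induction $\{B,A^m\}$ is multiplication by the nested matrix commutator $\{\mathbf B,\mathbf A^m\}$; thus each $\{B,A^m\}$ is bounded with $\|\{B,A^m\}\|=\|\{\mathbf B,\mathbf A^m\}\|\le\|\mathbf B\|(2\|\mathbf A\|)^m$, maps $\mathrm D(A)$ into itself, and \eqref{ComIneq} holds for suitable $\eta,K_1>0$ via $\|\{B,A^m\}R_A(\lambda)\|\le\|\{B,A^m\}\|\,\|R_A(\lambda)\|$ and \eqref{Sector}. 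Moreover $\{B,A^m\}T_{\theta-\chi_{\max}}\in L(X)$, and since $\|\{B,A^m\}T_{\theta-\chi_{\max}}\|^{1/m}\le(\|\mathbf B\|\,\|T_{\theta-\chi_{\max}}\|)^{1/m}\,2\|\mathbf A\|$ stays bounded as $m\to\infty$, the quantity $\|\{B,A^m\}T_{\theta-\chi_{\max}}\|^{1/m}/m$ tends to $0$, so \eqref{convergence} holds trivially. Hence Theorem~1 applies and it suffices to prove the asymptotic stability of \eqref{CS}; with $\{B,A^m\}$ as above and $\chi_{\max}+\chi_{k+1}\le2\chi_{\max}$ (from \eqref{ADT}), this is exactly \eqref{CS-1}--\eqref{CS-2} with $\mathbf G_k=\sum_{m\ge1}\frac{(\chi_{\max}+\chi_{k+1})^m}{m!}\{\mathbf B,\mathbf A^m\}$ and $\|\mathbf G_k\|\le\omega$ uniformly in $k$.

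It remains to prove that \eqref{CS-1}--\eqref{CS-2} is asymptotically stable, and here I would argue as in the computation displayed before the statement. With $\mathbf P(t)=e^{2\pi^2\mu^2(t-k\theta)/\ell^2}e^{-\mathbf A^{\T}(t-k\theta)}\mathbf P_0e^{-\mathbf A(t-k\theta)}$ on $(k\theta,(k+1)\theta]$ --- the $\theta$-periodic, piecewise-smooth, positive-definite solution of the matrix differential equation exhibited above --- integration by parts and Friedrichs' inequality give $\dot V(t,z)\le0$ on every interval $(k\theta,(k+1)\theta)$, while \eqref{MIneq} gives $V((k+1)\theta^+,z^+)<V((k+1)\theta,z)$ for every nonzero state. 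Since \eqref{MIneq} is a strict inequality between fixed matrices and $\|\mathbf G_k\|\le\omega$ uniformly in $k$, there is a contraction factor $\gamma\in(0,1)$ independent of $k$ with $V((k+1)\theta^+,z^+)\le\gamma V((k+1)\theta,z)$; thus $V(t,z(t))$ is nonincreasing between jumps and contracted by $\gamma$ at each jump, whence $V(t,z(t))\to0$ exponentially as $t\to\infty$. Because $\mathbf P$ is continuous, $\theta$-periodic and positive definite, there are constants $0<\lambda_-\le\lambda_+$ with $\lambda_-\|z\|_{L^2}^2\le V(t,z)\le\lambda_+\|z\|_{L^2}^2$; combined with the previous sentence this yields $\|z(t)\|_{L^2}^2\le(\lambda_+/\lambda_-)\|z_0\|_{L^2}^2$ for all $t$ and $\|z(t)\|_{L^2}\to0$, that is, \eqref{CS-1}--\eqref{CS-2} is exponentially, in particular asymptotically, stable, and Theorem~1 then completes the proof. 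I would also note that the condition $r_\sigma(\Phi)<e^{\pi^2\mu^2\theta/\ell^2}$ is automatic once \eqref{MIneq} holds: dropping the positive semidefinite term $(2\omega\|\mathbf B\mathbf P_0\|+\omega^2\|\mathbf P_0\|)e^{\mathbf A^{\T}\theta}e^{\mathbf A\theta}$ in \eqref{MIneq} and applying the discrete Lyapunov criterion to $e^{-\pi^2\mu^2\theta/\ell^2}\Phi=e^{-\pi^2\mu^2\theta/\ell^2}\mathbf Be^{\mathbf A\theta}$ gives $r_\sigma(\Phi)<e^{\pi^2\mu^2\theta/\ell^2}$, so this hypothesis merely records the contractivity of the purely discrete dynamics scaled by the parabolic decay.

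The main obstacle is not any single estimate but the bookkeeping that links this concrete system to the abstract framework of Theorem~1. Two points deserve care: the identification of $\{B,A^m\}$ with multiplication by $\{\mathbf B,\mathbf A^m\}$, which rests on $\mathbf B$ being a constant matrix commuting with $\partial^2_{yy}$ (so that the abstract comparison system \eqref{CS} really collapses to the matrix-coefficient system \eqref{CS-1}); and the uniformity in $k$ of the contraction factor $\gamma$, which is exactly where the ADT bound $\chi_{\max}+\chi_{k+1}\le2\chi_{\max}$ enters, through $\|\mathbf G_k\|\le\omega$. Everything else is either a standard fact about analytic semigroups generated by uniformly elliptic operators with Dirichlet conditions or the Lyapunov computation already carried out before the statement.
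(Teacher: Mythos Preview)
Your proposal is correct and follows essentially the same route as the paper: reduce \eqref{PEq-1}--\eqref{PEq-2} to the comparison system \eqref{CS-1}--\eqref{CS-2} via Theorem~1, and then use the Lyapunov function \eqref{LF} with the $\theta$-periodic matrix $\mathbf P(t)$ together with \eqref{MIneq} to conclude asymptotic stability. If anything, you are more explicit than the paper in two places --- verifying the hypotheses of Theorem~1 (sectoriality, the identification of $\{B,A^m\}$ with multiplication by $\{\mathbf B,\mathbf A^m\}$, the bounds yielding \eqref{ComIneq} and \eqref{convergence}) and extracting a uniform contraction factor $\gamma$ from the strict inequality \eqref{MIneq} --- and you correctly observe that the spectral radius condition $r_\sigma(\Phi)<e^{\pi^2\mu^2\theta/\ell^2}$ is already implied by \eqref{MIneq}.
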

We consider a numerical example setting $\ell=\pi$, $\mu=1$, $\theta=1$, $\chi_{\max}=0.1$
\begin{equation*}
\gathered
\bold A=
\begin{pmatrix}
1.2&0.1\\
0.1&-3
\end{pmatrix},\quad
\bold B=
\begin{pmatrix}
0.2&0.1\\
-0.1&1.5
\end{pmatrix}.
\endgathered
\end{equation*}
In this case, $\omega\approx 0.1726$ and for the matrix $\bold P_0=\id$, all conditions of the Proposition 1 are satisfied; therefore, the linear impulsive system \eqref{PEq-1} -- \eqref{PEq-2} is asymptotically stable. 
We note that in this case the matrix $\bold A-\frac{\pi^2\mu^2}{\ell^2}\text{id}$ is not a Hurwitz matrix, and the matrix $\bold B$ is not a Schur matrix which means that both continuous and discrete dynamics are unstable. This circumstance as well as non-constant dwell-time is a significant obstacle to the direct application of the Lyapunov function method for the initial system \eqref{PEq-1}---\eqref{PEq-2}.

\section{Conclusion} 
The main theorem allows one to study wide classes of infinite-dimensional systems, for example, systems of parabolic partial differential equations, integro-differential partial differential equations and others. For the comparison system, the problem of construction of a Lyapunov function is much simpler than for the original system since dwell-times are constant. We also note that the obtained stability conditions have a wide range of applicability since they are applicable when the continuous and discrete dynamics are both unstable.
 It is of interest to extend these results to the case when the operator $A$ generates a $C_0$ -- semigroup as well as to relax the assumptions about the operators $A$, $\,B$ and the sequence of commutators $\{B,A^m\}$ that we have applied in Theorem 1.

\addtolength{\textheight}{-12cm}   % This command serves to balance the column lengths
                                  % on the last page of the document manually. It shortens
                                  % the textheight of the last page by a suitable amount.
                                  % This command does not take effect until the next page
                                  % so it should come on the page before the last. Make
                                  % sure that you do not shorten the textheight too much.

%%%%%%%%%%%%%%%%%%%%%%%%%%%%%%%%%%%%%%%%%%%%%%%%%%%%%%%%%%%%%%%%%%%%%%%%%%%%%%%%

%%%%%%%%%%%%%%%%%%%%%%%%%%%%%%%%%%%%%%%%%%%%%%%%%%%%%%%%%%%%%%%%%%%%%%%%%%%%%%%%

%%%%%%%%%%%%%%%%%%%%%%%%%%%%%%%%%%%%%%%%%%%%%%%%%%%%%%%%%%%%%%%%%%%%%%%%%%%%%%%%

\section*{ACKNOWLEDGMENT}

The authors gratefully acknowledge the assistance and valuable suggestions of Dr. Gunther Dirr.

%%%%%%%%%%%%%%%%%%%%%%%%%%%%%%%%%%%%%%%%%%%%%%%%%%%%%%%%%%%%%%%%%%%%%%%%%%%%%%%%

\bibliographystyle{ieeetr}
\bibliography{mybiblio.bib}

\begin{thebibliography}{10}

\bibitem{TGS2012}
A.~R. Teel, R.~G. Sanfelice, and R.~Goebel, ``Hybrid control systems,'' in {\em
  Mathematics of complexity and dynamical systems. {V}ols. 1--3}, pp.~704--728,
  Springer, New York, 2012.

\bibitem{SP1995}
A.~M. Samo\u{\i}lenko and N.~A. Perestyuk, {\em Impulsive differential
  equations}, vol.~14 of {\em World Scientific Series on Nonlinear Science.
  Series A: Monographs and Treatises}.
\newblock World Scientific Publishing Co., Inc., River Edge, NJ, 1995.
\newblock Translated from the Russian by Y. Chapovsky.

\bibitem{LBS1989}
V.~Lakshmikantham, D.~D. Ba\u{\i}nov, and P.~S. Simeonov, {\em Theory of
  impulsive differential equations}, vol.~6 of {\em Series in Modern Applied
  Mathematics}.
\newblock World Scientific Publishing Co., Inc., Teaneck, NJ, 1989.

\bibitem{L2003}
D.~Liberzon, {\em Switching in systems and control}.
\newblock Systems \& Control: Foundations \& Applications, Birkh\"{a}user
  Boston, Inc., Boston, MA, 2003.

\bibitem{NaBa1994}
K.~S. Narendra and J.~Balakrishnan, ``A common {L}yapunov function for stable
  {LTI} systems with commuting {$A$}-matrices,'' {\em IEEE Trans. Automat.
  Control}, vol.~39, no.~12, pp.~2469--2471, 1994.

\bibitem{MaAg2000}
J.~L. Mancilla-Aguilar, ``A condition for the stability of switched nonlinear
  systems,'' {\em IEEE Trans. Automat. Control}, vol.~45, no.~11,
  pp.~2077--2079, 2000.

\bibitem{Gur1995}
L.~Gurvits, ``Stability of discrete linear inclusion,'' {\em Linear Algebra
  Appl.}, vol.~231, pp.~47--85, 1995.

\bibitem{ABL2012}
A.~A. Agrachev, Y.~Baryshnikov, and D.~Liberzon, ``On robust {L}ie-algebraic
  stability conditions for switched linear systems,'' {\em Systems Control
  Lett.}, vol.~61, no.~2, pp.~347--353, 2012.

\bibitem{SBT2019}
V.~I. Slyn'ko, O.~Tun\c{c}, and V.~O. Bivziuk, ``Application of commutator
  calculus to the study of linear impulsive systems,'' {\em Systems Control
  Lett.}, vol.~123, pp.~160--165, 2019.

\bibitem{SlB2019}
V.~O. Bivzyuk and V.~I. Slyn'ko, ``Sufficient conditions for the stability of
  linear differential equations with periodic impulse action,'' {\em Mat. Sb.},
  vol.~210, no.~11, pp.~3--23, 2019.

\bibitem{SlB2020}
V.~Bivziuk and V.~Slyn’ko, ``Comparison principle for linear differential
  equations with periodic impulsive action,'' in {\em 2019 57th Annual Allerton
  Conference on Communication, Control, and Computing (Allerton)},
  pp.~1023--1029, 2019.

\bibitem{DM2013}
S.~Dashkovskiy and A.~Mironchenko, ``Input-to-state stability of nonlinear
  impulsive systems,'' {\em SIAM J. Control Optim.}, vol.~51, no.~3,
  pp.~1962--1987, 2013.

\bibitem{DS2022}
S.~Dashkovskiy and V.~Slynko, ``Stability conditions for impulsive dynamical
  systems,'' {\em Math. Control Signals Systems}, vol.~34, no.~1, pp.~95--128,
  2022.

\bibitem{DS2023}
S.~Dashkovskiy and V.~Slynko, ``Dwell-time stability conditions for infinite
  dimensional impulsive systems,'' {\em Automatica J. IFAC}, vol.~147,
  pp.~Paper No. 110695, 12, 2023.

\bibitem{MGL2018}
A.~Mironchenko, G.~Yang, and D.~Liberzon, ``Lyapunov small-gain theorems for
  networks of not necessarily {ISS} hybrid systems,'' {\em Automatica J. IFAC},
  vol.~88, pp.~10--20, 2018.

\bibitem{Mag54}
W.~Magnus, ``On the exponential solution of differential equations for a linear
  operator,'' {\em Comm. Pure Appl. Math.}, vol.~7, pp.~649--673, 1954.

\bibitem{Pazy}
A.~Pazy, {\em Semigroups of linear operators and applications to partial
  differential equations}, vol.~44 of {\em Applied Mathematical Sciences}.
\newblock Springer-Verlag, New York, 1983.

\end{thebibliography}

\end{document}